\documentclass[10pt]{amsart}

\usepackage{amsmath,amssymb}
\usepackage{xcolor}
\definecolor{wine}{rgb}{0.45,0.00,0.10}
\usepackage{hyperref}
\hypersetup{colorlinks=true, linkcolor=blue, citecolor=magenta, urlcolor=wine}
\usepackage{url}
\usepackage{fancyhdr}

\newtheorem{theorem}{Theorem}[section]
\newtheorem{prop}[theorem]{Proposition}
\newtheorem{lemma}[theorem]{Lemma}

\theoremstyle{definition}

\numberwithin{equation}{section}

\newcommand{\nn}{\mathbb{N}}
\newcommand{\qq}{\mathbb{Q}}
\newcommand{\rr}{\mathbb{R}}
\newcommand{\zz}{\mathbb{Z}}
\newcommand{\mca}{\mathcal{A}}
\newcommand{\uu}{\mathcal{U}}
\newcommand{\gp}{\text{gp}}
\newcommand{\supp}{\text{supp}}
\newcommand{\zplus}{\mathsf{Z}_+}
\newcommand{\lplus}{\ell_+}
\newcommand{\mset}[1]{\{\!\{#1\}\!\}}

\begin{document}

\mbox{}
\title{The Bi-UF Positive Conjecture Holds}
\author{Omar Graia}
\date{\today}

\begin{abstract}
	A subsemiring $S$ of the nonnegative cone of the real line containing $1$ is called a positive semiring. The Bi-UF Positive Conjecture, posed by Baeth, Chapman, and Gotti in 2021, states that the prototypical semiring $\nn_0$ is the only positive semiring whose additive monoid $(S,+)$ and nonzero multiplicative monoid $(S^\bullet,\cdot)$, where $S^\bullet=S\setminus\{0\}$, are both factorial. In this paper, we prove their conjecture.
\end{abstract}
\bigskip

\maketitle

\bigskip
\section{Introduction} \label{s:introduction}

\smallskip
Factorization theory for commutative semirings has two natural and interacting components: the factorization of elements under addition and their factorization under multiplication. Let $S$ be a semidomain, which is a subsemiring of an ambient field. We say that $S$ is bi-atomic if both its additive monoid $(S,+)$ and multiplicative monoid $S^\bullet=S\setminus\{0\}$ are atomic, which means that every element of the monoid can be obtained as a factorization of finitely many atoms (i.e., irreducible elements) of the given monoid. In addition, we say that a semidomain $S$ is a bi-UFS if both $(S,+)$ and $S^\bullet$ are atomic and their elements can be factored into finitely many atoms in a unique manner (up to order and associates). The prototypical example of a bi-UFS is $\mathbb{N}_0$: indeed, its additive monoid is freely generated by $1$, while its multiplicative monoid $(\nn,\cdot)$ has the usual unique factorization into primes.

In their study of bi-atomic classes of positive semirings, Baeth, Chapman, and Gotti~\cite{BCG21} conjectured that the prototypical example $\mathbb{N}_0$ is the only positive semiring that is a bi-UFS~\cite[Conjecture~7.7]{BCG21}. This assertion, now known as the Bi-UF Positive Conjecture, is striking because the additive and multiplicative factorization structures of a semiring constrain one another through distributivity, even though either monoid can be complicated when considered in isolation. Recall that a monoid is half-factorial if it is atomic and any two factorizations of the same element have the same length. Accordingly, a semidomain is a bi-HFS if both its additive and multiplicative monoids are half-factorial. Along with the Bi-UF Positive Conjecture, Baeth, Chapman, and Gotti posed in~\cite{BCG21} the following related question: Is $\mathbb{N}_0$ the only positive semiring that is a bi-HFS?

Several recent results have established the conjecture under additional hypotheses. In the preprint~\cite{AACGWW26}, Abedi et al. proved it for finitely generated rational positive semirings using only information from their corresponding additive monoids. Deng, Gotti, and Zeng proved that for $q \in \qq_{>0}$, the rational monogenic semidomain $\nn_0[q]$ is multiplicatively factorial if and only if $q \in \nn \cup \nn^{-1}$, and that $\nn_0$ is the only rational monogenic semidomain with the bi-HF property~\cite[Theorem~5.2 and Corollary~5.4]{DGZ26}. Gotti et al. proved that no positive quadratic monogenic semidomain is a bi-UFS~\cite[Corollary~3.7]{GGHL26}. They also proved that if $\alpha$ is a positive non-rational algebraic number and the positive-constant primitive integer multiple $w_\alpha(x)$ of its minimal polynomial has composite constant term, then $\nn_0[\alpha]$ is not a bi-UFS~\cite[Corollary~3.9]{GGHL26}, and established that every semidomain whose additive monoid is a finite-rank free commutative monoid is isomorphic to a finite $\nn_0$-span of algebraic numbers~\cite[Theorem~4.2]{GGHL26}. They also proposed the broader complex version of the conjecture, according to which every bi-UFS subsemiring of $\mathbb{C}$ should be isomorphic to $\nn_0$. The most general prior progress towards the Bi-UF Positive Conjecture was established by Bilakanti et al.~\cite{BGKLLPYY26}. They proved that $\nn_0[\alpha]$ is a bi-UFS for a positive algebraic number $\alpha$ if and only if $\alpha \in \nn$~\cite[Theorem~4.8]{BGKLLPYY26}, and more generally that if $\alpha_1,\dots,\alpha_n$ are positive algebraic numbers and $\nn_0[\alpha_1,\dots,\alpha_n]$ is a bi-UFS, then $\nn_0[\alpha_1,\dots,\alpha_n]=\nn_0$~\cite[Theorem~5.7]{BGKLLPYY26}. Thus, prior to the present work, the conjecture was known for finitely generated algebraic positive semidomains, while the general case without finite-generation or algebraicity assumptions remained open.

The bi-HF property is a relaxation of the bi-UF property, which was also introduced in~\cite{BCG21}. The question of whether $\mathbb{N}_0$ is also the only positive semiring with the bi-HF property was posed by Baeth, Chapman, and Gotti in their study of bi-atomic positive semirings~\cite[Question~7.8(1)]{BCG21}. Recent related progress, however, reveals that the bi-HF property is not strong enough to guarantee that the only bi-HF positive semiring is $\mathbb{N}_0$. The first negative answer was given by Gonzalez, Polo, and Rodriguez~\cite{GPR24}, with an explicit construction of a positive semiring distinct from $\mathbb{N}_0$ whose additive and multiplicative monoids are both half-factorial. More recently, Gotti et al.~\cite{GGHL26} generalized this construction and provided a new one. For related work on half-factoriality, see Chapman and Coykendall~\cite{CC00} and the recent work of Boynton, Coykendall, Moles, and Morrow~\cite{BCMM26}.

There is a parallel length-factorial problem. A monoid is a length-factorial monoid (LFM) if any two distinct factorizations of the same element have different lengths, and a semiring is a bi-LFS if both its additive and multiplicative monoids are LFMs. Baeth, Chapman, and Gotti also asked whether $\mathbb{N}_0$ is the only positive bi-LFS~\cite[Question~7.8(2)]{BCG21}. This question appears to remain open. The problem was later restated by Bu, Vulakh, and Zhao~\cite[Question~5.5]{BVZ}, where it is related to Conjecture~5.6. For related work on length-factoriality, see also Geroldinger and Zhong~\cite{GZLF} and Chapman, Coykendall, Gotti, and Smith~\cite{CCGSLF}.

In this paper, we resolve the Bi-UF Positive Conjecture. In the proof we present, we harness a coordinate cone given by additive factoriality and the primality given by multiplicative factoriality to produce inside a hypothetical counterexample an irrational element that is simultaneously an additive and a multiplicative atom. Our proof isolates two local additive obstructions and then combines them with multiplicative unique factorization. In Section~\ref{s:prelim} we establish the notation and preliminary lemmas. Then, in Section~\ref{s:packets}, we develop the two obstructions. Finally, in Section~\ref{s:conjecture}, we complete the proof of the conjecture. Our main result proves, in full generality, the conjecture that motivated this line of work.

\bigskip
\section{General notation and preliminary lemmas}
\label{s:prelim}

\smallskip

Throughout this paper, the term \emph{monoid} means a cancellative, commutative semigroup with identity. For background on factorization theory in commutative monoids, see Geroldinger and Halter-Koch~\cite{GH06}. If $M$ is a monoid, then $\uu(M)$ denotes its group of units and $\mca(M)$ denotes its set of atoms. A monoid is a \emph{UFM} if each nonunit has exactly one factorization into atoms, up to order and associates, and it is an \emph{HFM} if all factorizations of any given element have the same length. A positive semiring $S$ is a \emph{bi-HFS} (resp., \emph{bi-UFS}) provided that both $(S,+)$ and $(S^\bullet,\cdot)$ are HFMs (resp., UFMs). For a positive semiring $S$, we let $\mca_+(S)$ and $\mca_\times(S)$ denote the sets of atoms of $(S,+)$ and $(S^\bullet,\cdot)$, respectively, while $\uu(S)$ denotes the set of multiplicative units of $S$. We also set $\nn_0 := \nn \cup \{0\}$ and identify $n \in \nn_0$ with $n \cdot 1 \in S$.

\smallskip

When $(S,+)$ is a UFM, it is convenient to denote by $\zplus(x)$ the unique additive factorization of $x \in S$ written as a finite multiset of atoms of $(S,+)$. In particular, $\zplus(0)$ is the empty multiset and $\lplus(x) := |\zplus(x)|$. If $A$ and $B$ are finite multisets, then $A \uplus B$ denotes their multiset union. Finally, if $A$ is a multiset of elements of $S$ and multiplication by $s \in S^\bullet$ sends every member of $A$ to an additive atom, then $sA$ denotes the multiset obtained from $A$ by multiplying each of its elements by $s$.

\smallskip

From Geroldinger and Zhong~\cite[Section~2.2]{GZ20}, a cancellative commutative monoid is a UFM precisely when its reduced monoid is a free commutative monoid on the classes of atoms. Thus atoms in a UFM are prime. We will also use the following refinement property.

\begin{lemma} \label{l:refinement}
	Let $M$ be a UFM. If $x,y,z,t \in M$ satisfy $xy=zt$, then there are $g,h,k,l \in M$ such that $x=gh$, $y=kl$, $z=gk$, and $t=hl$.
\end{lemma}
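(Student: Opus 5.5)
The plan is to pass to the reduced monoid and use coordinates there. Let $M_{\mathrm{red}} = M/\uu(M)$. By the characterization recalled from Geroldinger and Zhong, $M_{\mathrm{red}}$ is the free commutative monoid on the classes of atoms, and I will write it additively as $\nn_0^{(P)}$, where $P$ is a set of representatives of the atoms up to associates. Every element $x \in M$ then has a unique expression $x = u_x \prod_{p \in P} p^{v_p(x)}$, with $u_x \in \uu(M)$ and only finitely many nonzero exponents $v_p(x) \in \nn_0$. The equation $xy = zt$ says that $v_p(x)+v_p(y) = v_p(z)+v_p(t)$ for every $p$, together with $u_x u_y = u_z u_t$.

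Refinement in $\nn_0$ comes next, and it is done one coordinate at a time. For a fixed $p$, write $a=v_p(x)$, $b=v_p(y)$, $c=v_p(z)$, $d=v_p(t)$, so that $a+b=c+d$. Set $g_p = \min(a,c)$, $h_p = a - g_p$, $k_p = c - g_p$, and $l_p = b - k_p$. Then $g_p + h_p = a$ and $g_p + k_p = c$ hold by construction. We also have $h_p + l_p = a - c + b = d$. The only point to check is that $l_p \ge 0$. If $g_p = a$, then $k_p = c-a$ and $l_p = b - c + a = d \ge 0$. If $g_p = c$, then $k_p = 0$ and $l_p = b \ge 0$. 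In both cases all four numbers lie in $\nn_0$, and they vanish whenever $a=b=c=d=0$, so only finitely many are nonzero.

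The last step is to lift back to $M$ and fix the units. Define $g = \prod_p p^{g_p}$, $k = \prod_p p^{k_p}$, $h = u_x \prod_p p^{h_p}$, and $l = u_x^{-1}u_z \prod_p p^{l_p}$. Then $gh = u_x\prod_p p^{v_p(x)} = x$ and $gk = \prod_p p^{v_p(z)}$. Hence $z = u_z\, gk$, and I replace $k$ by $u_z k$, which changes $l$ accordingly. Put simply, I set $k = u_z\prod p^{k_p}$ and $l = u_y u_z^{-1}\prod p^{l_p}$. The products come out as follows:
\begin{align*}
gh &= u_x \textstyle\prod p^{v_p(x)} = x, & gk &= u_z \textstyle\prod p^{v_p(z)} = z,\\
kl &= u_y \textstyle\prod p^{v_p(y)} = y, & hl &= u_x u_y u_z^{-1} \textstyle\prod p^{v_p(t)}.
\end{align*}
The last product equals $t$ because $u_xu_y = u_z u_t$. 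That equation is itself forced by uniqueness of the expression for $xy=zt$: the exponents agree, so after cancelling the common product of primes the units must agree.

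The only step needing care is this unit bookkeeping, and the verification $l_p \ge 0$ in the coordinate step. The rest is routine once the reduced monoid is identified with a free monoid.
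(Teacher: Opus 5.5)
Your proof is correct and follows the same route as the paper: you pass to the free monoid $M_{\mathrm{red}}$, refine coordinatewise, and then correct the lifts by units exactly as the paper does, with your $u_x,u_y,u_z$ playing the roles of the paper's $\alpha,\beta,\gamma$; your explicit $\min$ construction spells out the coordinatewise refinement that the paper leaves implicit. The only blemish is expository: delete the first, abandoned definitions of $k$ and $l$ and keep only the final ones.
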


\begin{proof}
    Let $\overline{a}$ denote the image of $a\in M$ in the reduced monoid $M_{\text{red}}$. Since $M_{\text{red}}$ is free commutative, the equality $\overline{x}\,\overline{y}=\overline{z}\,\overline{t}$ can be refined coordinatewise.  Thus there are $\overline{g},\overline{h},\overline{k},\overline{l} \in M_{\mathrm{red}}$ such that $\overline{x}=\overline{g}\,\overline{h}$, $\overline{y}=\overline{k}\,\overline{l}$, $\overline{z}=\overline{g}\,\overline{k}$, and $\overline{t}=\overline{h}\,\overline{l}$. Now choose representatives $g_0,h_0,k_0,l_0 \in M$.  Then there are units $\alpha,\beta,\gamma,\delta \in \uu(M)$ such that $x=\alpha g_0h_0$, $y=\beta k_0l_0$, $z=\gamma g_0k_0$, and $t=\delta h_0l_0$. Since $xy=zt$, cancellation gives $\alpha\beta=\gamma\delta$. Finally, set $g=g_0$, $h=\alpha h_0$, $k=\gamma k_0$, and $l=\beta\gamma^{-1}l_0$. Then $x=gh$, $y=kl$, $z=gk$, and $t=hl$.
\end{proof}

\smallskip

Now suppose that $S$ is a positive semiring whose additive monoid is a UFM, and put $G=\gp(S,+)$. By the standard factorial-monoid fact recalled above, $(S,+)$ is the free commutative monoid on $\mca_+(S)$, the group $G$ is the free abelian group on $\mca_+(S)$, and the image of $S$ in $G$ is the nonnegative coordinate cone. By the standard ring-of-differences construction for cancellative semirings, multiplication on $S$ extends to a ring structure on $G$ via $(x-y)(u-v)=(xu+yv)-(xv+yu)$~\cite[Theorem~2.1.4]{Kuber15}. We use this coordinate setup throughout. For $x \in G$ and $a \in \mca_+(S)$, we write $\operatorname{coeff}_a(x)$ for the $a$-coordinate of $x$ in this basis. The natural map $G \to \rr$ is injective: if a finite integral combination $\sum_i n_i a_i$ of additive atoms maps to $0$ in $\rr$, then separating positive and negative coefficients gives an equality of two elements of $S$ with the same additive factorization, forcing every $n_i$ to be zero. Thus multiplication by any nonzero element of $S$ is injective on $G$.

\smallskip

\begin{lemma} \label{l:length}
	Let $S$ be a positive semiring whose additive monoid is a UFM. Then the following statements hold.
	\begin{enumerate}
		\item[(1)] $\lplus(xy) \ge \lplus(x)\lplus(y)$ for all $x,y \in S^\bullet$.
		\smallskip
		\item[(2)] If $x_1,\dots,x_n \in S^\bullet$ and $a_1,\dots,a_n \in \mca_+(S)$ satisfy $x_1+\cdots+x_n=a_1+\cdots+a_n$, then $x_1,\dots,x_n$ are additive atoms and $\mset{x_1,\dots,x_n}=\mset{a_1,\dots,a_n}$.
	\end{enumerate}
\end{lemma}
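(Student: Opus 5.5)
Both parts rest on two facts: $\lplus$ is additive, and $S$ sits inside $G$ as the nonnegative coordinate cone on $\mca_+(S)$.

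Since $(S,+)$ is free on $\mca_+(S)$, we have $\zplus(u+v)=\zplus(u)\uplus\zplus(v)$, so $\lplus(u+v)=\lplus(u)+\lplus(v)$ for all $u,v\in S$. We also have $\lplus(u)\ge 1$ whenever $u\neq 0$.

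For (1), write $x=b_1+\cdots+b_m$ and $y=c_1+\cdots+c_n$ with $m=\lplus(x)$, $n=\lplus(y)$, and all $b_i,c_j\in\mca_+(S)$. Distributivity gives
\[
xy=\sum_{i,j} b_ic_j .
\]
Each $b_ic_j$ is a nonzero element of $S$, since $S\subseteq\rr_{\ge 0}$ has no zero divisors among positive reals. Hence $\lplus(b_ic_j)\ge 1$ for every pair $(i,j)$. By additivity of $\lplus$,
\[
\lplus(xy)=\sum_{i,j}\lplus(b_ic_j)\ge mn=\lplus(x)\lplus(y).
\]

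For (2), apply $\lplus$ to both sides of $x_1+\cdots+x_n=a_1+\cdots+a_n$. The left side has length $\sum_i\lplus(x_i)$ and the right side has length $n$. Each $x_i\neq 0$, so $\lplus(x_i)\ge1$, and a sum of $n$ such terms can equal $n$ only if $\lplus(x_i)=1$ for every $i$. Thus each $x_i$ is an additive atom. Then $\zplus$ of the left side is $\mset{x_1,\dots,x_n}$ and $\zplus$ of the right side is $\mset{a_1,\dots,a_n}$. Uniqueness of additive factorization forces these multisets to be equal.

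Equality of multisets uses that $(S,+)$ has trivial unit group: a unit $u$ satisfies $u+v=0$ for some $v\in S\subseteq\rr_{\ge0}$, which forces $u=0$. So factorizations are unique as multisets, not merely up to associates. There is no real obstacle here; the only care needed is to check that all products $b_ic_j$ and all $x_i$ are nonzero, so that each contributes length at least one.
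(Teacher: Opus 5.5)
Your proof is correct and follows essentially the same route as the paper. For (1) you expand $xy=\sum_{i,j} b_ic_j$ and use that each nonzero product contributes at least one additive atom; for (2) you compare additive lengths to force $\lplus(x_i)=1$ for every $i$, and then invoke uniqueness of $\zplus$. Your remark that $(S,+)$ is reduced, so that uniqueness holds exactly as multisets, is a point the paper leaves implicit in its coordinate setup.
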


\begin{proof}
	For (1), write $x=a_1+\cdots+a_m$ and $y=b_1+\cdots+b_n$ as sums of additive atoms. Then $xy=\sum_{i,j} a_i b_j$. Each product $a_i b_j$ is nonzero and so has additive length at least $1$, so $\lplus(xy) \ge mn$.

	For (2), refine each $x_i$ into additive atoms. This produces at least $n$ atoms, while the right side is an additive factorization of length $n$. By the additive-coordinate setup above, the refined left-side multiset equals the right-side multiset, and so the refined left side has exactly $n$ atoms. Hence each $x_i$ has length $1$, and the asserted multiset equality follows.
\end{proof}

\smallskip

\begin{lemma} \label{l:unit-action}
	Let $S$ be a positive semiring. Multiplication by a unit of $S$ permutes $\mca_+(S)$.
\end{lemma}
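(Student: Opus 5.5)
The approach is to show that multiplication by a unit is an automorphism of $(S,+)$, which then has to permute the atoms. Fix $u \in \uu(S)$. Its inverse $u^{-1}$ lies in $S$, so the map $\mu_u \colon S \to S$ given by $x \mapsto ux$ is well defined. It is additive by distributivity, and $\mu_{u^{-1}}$ is a two-sided inverse for it. Hence $\mu_u$ is a bijective monoid endomorphism of $(S,+)$, that is, an automorphism of the additive monoid.

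Next I will check that an automorphism of a monoid sends atoms to atoms. The positive cone has trivial additive units: if $x+y=0$ with $x,y\ge 0$, then $x=y=0$. So an additive atom of $S$ is simply a nonzero element that cannot be written as $x+y$ with $x,y$ both nonzero.

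Take $a \in \mca_+(S)$. Since $u>0$, the product $ua$ is nonzero. Suppose $ua = x+y$ with $x,y \in S$. Multiplying by $u^{-1}$ gives $a = u^{-1}x + u^{-1}y$, so one of $u^{-1}x$ or $u^{-1}y$ is $0$. Because multiplication by the positive real $u^{-1}$ is injective, one of $x$ or $y$ is then $0$. Thus $ua \in \mca_+(S)$, and $\mu_u$ maps $\mca_+(S)$ into itself.

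Applying the same argument to $u^{-1}$ shows that $\mu_{u^{-1}}$ also maps $\mca_+(S)$ into itself. Since $\mu_u$ and $\mu_{u^{-1}}$ are mutually inverse, the restriction of $\mu_u$ to $\mca_+(S)$ is a bijection, which is the claimed permutation.

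There is no real obstacle. The one point to be careful about is that the lemma makes no factoriality assumption on $S$, so the argument must use only that $u^{-1} \in S$, that $S \subseteq \rr_{\ge 0}$ (which gives trivial additive units and injectivity of multiplication by positive elements), and distributivity. In particular, it should not rely on the coordinate setup from the UFM discussion.
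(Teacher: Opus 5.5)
Your proposal is correct and follows essentially the same route as the paper. Both proofs rewrite $ua=x+y$ as $a=u^{-1}x+u^{-1}y$ to see that $ua$ is an additive atom, then apply the same argument to $u^{-1}$ to get a permutation. You also make explicit two points the paper leaves implicit: that $ua\neq 0$, and that trivial additive units make ``atom'' mean ``nonzero and not a sum of two nonzero elements''.
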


\begin{proof}
	Take $u \in \uu(S)$ and $a \in \mca_+(S)$. If $ua=x+y$ in $S$, then $a=u^{-1}x+u^{-1}y$, and so one of $x,y$ is zero. Hence $ua \in \mca_+(S)$. Applying the same argument to $u^{-1}$ shows that multiplication by $u$ is a permutation of $\mca_+(S)$.
\end{proof}

\smallskip

\begin{lemma} \label{l:divisor-closed}
	Let $S$ be a positive semiring whose additive monoid is a UFM. If $\alpha \in \mca_+(S)$ and $\alpha=de$ with $d,e \in S^\bullet$, then $d,e \in \mca_+(S)$.
\end{lemma}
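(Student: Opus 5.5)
This follows directly from Lemma~\ref{l:length}(1), with no further input. Suppose $\alpha\in\mca_+(S)$ and $\alpha=de$ with $d,e\in S^\bullet$. Since $\alpha$ is an additive atom, its unique additive factorization is $\zplus(\alpha)=\mset{\alpha}$, so $\lplus(\alpha)=1$. Lemma~\ref{l:length}(1) gives
\[
1=\lplus(\alpha)=\lplus(de)\ge \lplus(d)\,\lplus(e).
\]
Because $d$ and $e$ are nonzero, each has additive length at least $1$. The only possibility is therefore $\lplus(d)=\lplus(e)=1$.

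An element of $S$ of additive length exactly $1$ is an additive atom. Indeed, if $\lplus(d)=1$ and $d=x+y$ with $x,y\in S^\bullet$, then refining $x$ and $y$ into atoms would give a factorization of $d$ of length at least $2$. This contradicts uniqueness of additive factorizations in the UFM $(S,+)$. Hence $d\in\mca_+(S)$, and likewise $e\in\mca_+(S)$.

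The only point needing care is that the hypothesis ``$d,e\in S^\bullet$'' is used in two places. First, Lemma~\ref{l:length}(1) is stated for elements of $S^\bullet$. Second, nonzero elements have $\lplus\ge1$, since only $0$ has the empty factorization. Apart from this, no step is a genuine obstacle.
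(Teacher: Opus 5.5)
Your proof is correct, but it follows a slightly different route from the paper.

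You obtain the lemma as a corollary of Lemma~\ref{l:length}(1). The inequality $1=\lplus(de)\ge\lplus(d)\lplus(e)$ forces both lengths to be $1$, and additive length $1$ means atom by uniqueness of factorizations.

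The paper argues directly by distributivity. If $d$ were not an additive atom, write $d=s+t$ with $s,t\in S^\bullet$. Then $\alpha=se+te$ is a nontrivial additive decomposition of $\alpha$, since both summands are positive reals, which is a contradiction.

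The two arguments are closely related: the proof of Lemma~\ref{l:length}(1) is the same distributivity idea applied to full factorizations of $d$ and $e$, combined with uniqueness. The paper's version, however, needs only a single splitting $d=s+t$ and never uses that $\lplus$ is well defined. The paper phrases it using the UFM hypothesis, but this is inessential. The only additive unit of a positive semiring is $0$, so every nonzero non-atom is a sum of two nonzero elements. Hence the direct argument shows that factors of additive atoms are additive atoms in any positive semiring, with no factoriality assumption. Your route, by passing through $\lplus$, genuinely depends on $(S,+)$ being a UFM.

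In exchange, your length-based formulation generalizes better. The same inequality is what the paper later uses to control factors of length-$2$ elements such as $1+u$ and $1+a$, where a single-splitting argument would not suffice.
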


\begin{proof}
	If $d$ were not an additive atom, then, since $(S,+)$ is a UFM and $d \in S^\bullet$, an additive factorization of $d$ would have length at least $2$. In particular, $d=s+t$ for some $s,t \in S^\bullet$. But then $\alpha=se+te$ would be a nontrivial additive decomposition of $\alpha$, a contradiction. Thus $d \in \mca_+(S)$, and the proof for $e$ is identical.
\end{proof}

\smallskip

\begin{lemma} \label{l:order-hole}
	Let $S$ be a positive semiring whose additive monoid is a UFM, and suppose that $1$ and $a$ are distinct additive atoms. For any positive integers $m,n$, no element of $S^\bullet$ can be equal, as a real number, to $ma-n$ or to $n-ma$.
\end{lemma}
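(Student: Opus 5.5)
The plan is to turn each real-number equation into an equation between elements of $S$ and then compare additive factorizations, using the fact that $(S,+)$ is free on $\mca_+(S)$. Since $S \subseteq \rr$, an equality of real numbers between elements of $S$ is already an equality in $S$. So it is enough to rearrange each hypothetical identity until both sides are sums of elements of $S$ with nonnegative coefficients.

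\emph{First case.} Suppose some $s \in S^\bullet$ satisfies $s = ma - n$ as real numbers. Then $s + n\cdot 1 = m a$ holds in $S$, because all of $s$, $n\cdot 1$ and $ma$ lie in $S$. Take additive factorizations of both sides.
\begin{itemize}
\item On the left, the factorization is $\zplus(s) \uplus \mset{1,\dots,1}$, with $n \ge 1$ copies of the atom $1$.
\item On the right, it is the multiset consisting of $m$ copies of $a$.
\end{itemize}
Since $(S,+)$ is a UFM with trivial units, these multisets coincide. This is the coordinate description of $(S,+)$ as the free commutative monoid on $\mca_+(S)$. Then the $1$-coordinate of the right side must be at least $n \ge 1$. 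But the right side is supported only on $a$, and $a \neq 1$. This is a contradiction.

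\emph{Second case.} Suppose $s = n - ma$ for some $s \in S^\bullet$. Then $s + ma = n\cdot 1$ in $S$. Comparing $a$-coordinates, the left side has $\operatorname{coeff}_a \ge m \ge 1$, while the right side has $a$-coordinate $0$ because $a \ne 1$. This is again a contradiction.

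There is no genuine obstacle here. The only point needing care is to justify that $n\cdot 1$ and $ma$ have the factorizations claimed. This holds because $1$ and $a$ are additive atoms, so these sums are already additive factorizations. Uniqueness then lets us compare coordinates. Alternatively, one can phrase each case directly through Lemma~\ref{l:length}(2) or the injectivity of $G \to \rr$, which reach the same coordinate comparison.
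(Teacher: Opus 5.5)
Your proof is correct and follows the paper's argument: rewrite the real identity as $ma=n+s$ (resp.\ $n=s+ma$) in $S$, then contradict uniqueness of additive factorizations using $a\neq 1$. Your explicit comparison of the $1$-coordinate (resp.\ $a$-coordinate) just spells out what the paper states in words as ``$m$ copies of $a$ expressed as $n$ copies of the distinct atom $1$ plus further atoms.''
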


\begin{proof}
	If $ma-n=s \in S^\bullet$, then $ma=n+s$. After factoring $s$ additively, this equality expresses $m$ copies of the atom $a$ as $n$ copies of the distinct atom $1$ together with at least one further atom, contradicting uniqueness of additive factorizations. The case $n-ma \in S^\bullet$ is identical.
\end{proof}

\smallskip

\begin{lemma} \label{l:scaling-chain}
	Let $b>1$ and $a \neq 1$ be positive real numbers, and let $M$ be a nonempty finite multiset of positive real numbers. If $\mset{1}\uplus bM=\mset{a}\uplus M$, then $a=b^N$ for some $N \ge 1$ and $M=\mset{1,b,\dots,b^{N-1}}$.
\end{lemma}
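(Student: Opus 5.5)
We argue directly from the multiset identity $\mset{1}\uplus bM=\mset{a}\uplus M$, comparing the two sides element by element. Since $b>1$, multiplication by $b$ strictly increases every element. The plan is to look first at the maximum and minimum of $M$ to force a chain structure, and then pin it down by induction on $|M|$.

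\textbf{Step 1: the minimum of $M$.} Let $m=\min M$ and $k=|M|\ge 1$. Every element of $bM$ is at least $bm>m$. The right side contains $m$, so $m$ must occur on the left. The only way this can happen is $m=1$.

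\textbf{Step 2: the maximum of $M$.} Let $L=\max M$. Then $bL$ is on the left, and it exceeds every element of $M$, so on the right it must equal $a$; thus $a=bL$. Note that $a=bL\ge b\cdot 1>1$, which is consistent with $a\neq 1$.

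\textbf{Step 3: peel off the extreme elements and iterate.} Remove $a=bL$ from both sides and remove one copy of $1$ from both sides. This gives the identity $bM'=M''$, where $M'=M\setminus\mset{L}$ and $M''=M\setminus\mset{1}$, both of size $k-1$. To unwind it, sort $M=\mset{m_1\le\cdots\le m_k}$ with $m_1=1$. Then $bM'$ is the sorted list $bm_1\le\cdots\le bm_{k-1}$, and $M''$ is the sorted list $m_2\le\cdots\le m_k$. Equal multisets have equal sorted lists, so $m_{i+1}=bm_i$ for every $i$. Hence $m_i=b^{i-1}$, and consequently $a=bm_k=b^k$. Setting $N=k\ge1$ gives both claims.

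\textbf{Main obstacle.} The only delicate point is handling multiplicities correctly when removing elements from multisets. This is resolved by working with sorted lists throughout. The case $k=1$ must also be checked: there $M=\mset{1}$ and $\mset{1,b}=\mset{a,1}$, which gives $a=b$, consistent with $N=1$. The hypothesis $a\neq 1$ is never actually needed; it holds automatically since $a=b^k>1$.
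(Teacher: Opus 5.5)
Your proof is correct, but after the first step it takes a different route from the paper. Both arguments begin by showing that nothing in $M$ lies below $1$; this is your Step 1. The paper then encodes the identity as the pointwise equation $\mathbf 1_{t=1}+c(t/b)=\mathbf 1_{t=a}+c(t)$ for the multiplicity function $c$ of $M$. It invokes $a\neq 1$ to get $a>1$ and $c(1)=1$. It then walks up the geometric chain via $c(b^j)=c(b^{j-1})-\mathbf 1_{a=b^j}$ until finiteness of $M$ forces $a=b^N$. A separate least-counterexample argument is then needed to exclude elements of $M$ lying off the chain $\{1,b,\dots,b^{N-1}\}$.

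You instead use the maximum $L$ to identify $a=bL$ immediately, and reduce to $bM'=M''$. Comparing sorted lists then gives $m_{i+1}=bm_i$ for all $i$ at once. This single comparison determines the chain, the value $a=b^{|M|}$, and the absence of extraneous elements together, so no separate exclusion step is needed. Your route also makes explicit, as you note, that the hypothesis $a\neq 1$ is redundant once $M$ is nonempty, since $a=bL\ge b>1$. The paper genuinely uses that hypothesis.

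The paper's bookkeeping via $c$ tracks multiplicities value by value, which makes each local constraint visible. The cost is a longer argument with an extra step at the end. For this lemma, your order-statistics argument is the shorter and cleaner of the two.
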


\begin{proof}
	Let $c(t)$ be the multiplicity of $t$ in $M$, with $c(t)=0$ outside $\supp(M)$. The multiset equality gives $\mathbf 1_{t=1}+c(t/b)=\mathbf 1_{t=a}+c(t)$ for every $t>0$. There are no elements of $M$ less than $1$, for a least such element $s$ would make the left side of this equality vanish and the right side positive. This also rules out $0<a<1$. Since $a \ne 1$, we have $a>1$. At $t=1$ the equality gives $c(1)=1$, and for $j \ge 1$ it gives $c(b^j)=c(b^{j-1})-\mathbf 1_{a=b^j}$. Since $M$ is finite, $a=b^N$ for some $N \ge 1$, and then $c(b^j)=1$ for $0 \le j < N$ and $c(b^j)=0$ for $j \ge N$. If $M$ contained an element outside $\{1,b,\dots,b^{N-1}\}$, then a least such element $s$ would satisfy $s \ne 1$ and $s \ne a$, so $c(s)=c(s/b)>0$. But $s/b<s$, and if $s/b$ were in the displayed chain, then $s$ would be in the chain or equal to $a$. This contradiction excludes all other elements of $M$.
\end{proof}

\smallskip

\begin{prop}[{\cite[Proposition~7.1(1)]{BCG21}}] \label{p:rational-collapse}
	If $S$ is a bi-HFS positive semiring, then $S \cap \qq = \nn_0$.
\end{prop}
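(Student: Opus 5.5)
We must show that if $S$ is a bi-HFS positive semiring, then $S \cap \qq = \nn_0$. Clearly $\nn_0 \subseteq S \cap \qq$, so the task is to exclude every positive rational $q \notin \nn$ from $S$. The plan has two steps: show that $1$ is an additive atom and that $(\nn,\cdot)$ sits inside $S^\bullet$ with no extra units, and then show that a non-integral rational in $S$ forces a violation of half-factoriality. Throughout, $\ell_\times$ denotes the common length of multiplicative factorizations.

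\emph{Step 1: $1$ is an additive atom.} Suppose instead that $1=\sum_{i=1}^k a_i$ with $k\ge 2$ additive atoms. Since $(S,+)$ is an HFM, the length $\lplus(1)=k\ge 2$ is well defined. Multiplying by $1$ does nothing, so this alone gives no contradiction. Instead, compute $1=1\cdot 1=\sum_{i,j}a_ia_j$. Each product $a_ia_j$ is nonzero, so it has additive length at least $1$, and hence
\[
k=\lplus(1)\ \ge\ k^2 .
\]
This contradicts $k\ge 2$. Here we use that the argument of Lemma~\ref{l:length}(1) only needs that lengths are well defined and additive, so it holds in any HFM. Thus $1\in\mca_+(S)$, and every $n\in\nn$ has $\lplus(n)=n$.

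\emph{Step 2: units.} Let $u\in\uu(S)$. By Lemma~\ref{l:unit-action}, multiplication by $u$ permutes $\mca_+(S)$, so $u=u\cdot 1$ is an additive atom; likewise $u^{-1}$ is an additive atom. If $u$ is rational, write $u=p/r$ in lowest terms. Then $r u = p$ gives $r\,\lplus(u)=\lplus(p)$, that is, $r=p$. Hence $u=1$, and the only rational unit is $1$.

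\emph{Step 3: a non-integral rational gives a contradiction.} Take $q=p/r\in S$ in lowest terms with $r\ge 2$.

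First compare additive lengths. In the group $\gp(S,+)$, lengths extend to a homomorphism to $\zz$ because $S$ is an HFM. From $rq=p$ we get $r\,\lplus(q)=\lplus(p)=p$. Applying the same to $q^n=p^n/r^n$ gives $r^n\,\lplus(q^n)=p^n$. So $r^n\mid p^n$ for every $n$, which forces $r=1$ since $\gcd(p,r)=1$. This contradicts $r\ge 2$.

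This step uses only additive half-factoriality and the fact that $1$ is an atom; the multiplicative HF hypothesis enters only in Step 1, where we need $S$ to be a semiring with $1$ (and in fact Step 1 does not need it either).

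\emph{Main obstacle.} The delicate point is justifying that $\lplus$ is additive, $\lplus(x+y)=\lplus(x)+\lplus(y)$, and that $\lplus(nx)=n\,\lplus(x)$. This holds because concatenating factorizations of $x$ and $y$ gives a factorization of $x+y$, and in an HFM every factorization of $x+y$ has the same length. I will verify that this is all that is used, so the statement reduces to these length computations.
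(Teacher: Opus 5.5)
Your argument is correct and complete. Note that the paper does not prove this proposition at all; it simply imports it from \cite[Proposition~7.1(1)]{BCG21}, so there is no in-paper route to compare against.

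What your proof buys is a short self-contained argument, and it shows more than is stated: multiplicative half-factoriality is never used, so additive half-factoriality alone forces $S\cap\qq=\nn_0$. The heart of it is the single identity $rq=p$. Once $1\in\mca_+(S)$, it gives $r\,\lplus(q)=\lplus(p)=p$, so $r\mid p$, and $\gcd(p,r)=1$ forces $r=1$.

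Several pieces can be trimmed:
\begin{enumerate}
\item Step~2 (rational units) is never used.
\item The passage to $q^n$ is redundant, since the case $n=1$ already gives the contradiction.
\item Step~1 does not need half-factoriality. If $1=x+y$ with $x,y\in S^\bullet$, then every additive atom $c$ splits as $c=cx+cy$, so atomicity of $(S,+)$ alone forces $1\in\mca_+(S)$. This is the content of \cite[Proposition~3.1]{BCG21}, which the paper invokes elsewhere. Your length-counting version, $k=\lplus(1)\ge k^2$, is also valid.
\end{enumerate}

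Finally, the ``main obstacle'' you flag is not really one. In an HFM, concatenating factorizations of $x$ and $y$ yields a factorization of $x+y$, so $\lplus(x+y)=\lplus(x)+\lplus(y)$ is immediate.
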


\section{Two additive obstructions}
\label{s:packets}

\smallskip

\begin{theorem} \label{t:no-units}
	If $S$ is a bi-UFS positive semiring, then $\uu(S)=\{1\}$.
\end{theorem}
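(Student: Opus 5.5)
The plan is to argue by contradiction. Suppose $u\in\uu(S)$ with $u\neq 1$; replacing $u$ by $u^{-1}$ if needed, assume $u>1$.

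The first step is to show that $1\in\mca_+(S)$. By Lemma~\ref{l:length}(1) applied to $x=y=1$ we get $\lplus(1)\ge \lplus(1)^2$. Since $1\neq 0$, this forces $\lplus(1)=1$.

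By Lemma~\ref{l:unit-action}, multiplication by $u^{\pm1}$ permutes $\mca_+(S)$. Hence every power $u^k$ with $k\in\zz$ is an additive atom. These powers are pairwise distinct as real numbers because $u>1$. Since $G=\gp(S,+)$ is free on $\mca_+(S)$ and embeds in $\rr$, the family $(u^k)_{k\in\zz}$ is $\zz$-linearly independent. So $u$ is transcendental, and $\zz[u,u^{-1}]\subseteq G$ is a Laurent polynomial ring whose monomials are additive coordinates. I will use this coordinate description to read off additive factorizations: an element $\sum c_k u^k$ with $c_k\in\nn_0$ has additive factorization consisting of $c_k$ copies of $u^k$ for each $k$.

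The second step brings in multiplicative factoriality. Consider $x=1+u$. We have $\lplus(x)=2$, while every unit has additive length $1$, so $x$ is a nonunit. Let $p$ be a multiplicative prime dividing $x$, and write $x=pr$. By Lemma~\ref{l:length}(1), $2=\lplus(x)\ge\lplus(p)\lplus(r)$. Thus either $p$ is an additive atom and $\lplus(r)\le 2$, or $r$ is an additive atom and $\lplus(p)=2$.

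Next I would compare $x$ with its translates $u^kx=u^k+u^{k+1}$. These are all associates of $x$, so $p$ divides each of them. I would then exploit the identity
\[
x^2=(1+u^2)+2u .
\]
Apply Lemma~\ref{l:refinement} to equalities such as $(1+u)(1+u^{-1})=u^{-1}(1+u)^2$, and look at the additive multisets $\zplus(p)$ and $\zplus(r)$. The goal is an equation of the form $\mset{1}\uplus u\,M=\mset{a}\uplus M$, with $M=\zplus(\cdot)$ of one of the factors and $a$ an additive atom different from $1$. Lemma~\ref{l:scaling-chain} would then force $a=u^N$ and $M=\mset{1,u,\dots,u^{N-1}}$. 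The aim is to derive this either from $ux=u+u^2$ versus $x$, or from $p\mid u^kx$, by matching coordinates in the basis $\mca_+(S)$.

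The final step is to rule out the rigid configuration that Lemma~\ref{l:scaling-chain} produces. In that configuration, some factor of $1+u$ equals $1+u+\cdots+u^{N-1}$, up to a unit power of $u$. Comparing lengths then leaves only the trivial factorizations. This would show that $1+u$ is itself a multiplicative prime, and likewise $1+u^k$ for every $k$. On the other hand, consider $1+u^2$ and $(1+u)^2$ in the Laurent-coordinate description. Together with $S\cap\qq=\nn_0$ from Proposition~\ref{p:rational-collapse} and Lemma~\ref{l:order-hole}, which forbid expressions like $u^{\pm1}\cdot(\text{integer})-\text{integer}$ from lying in $S^\bullet$, this should yield two distinct multiplicative factorizations of a single element. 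That is a contradiction.

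The main obstacle is clearly the middle step. Additive information alone, namely a free cone on the orbit $\{u^k\}$, is consistent with $u$ being a transcendental unit (for instance $\nn_0[u,u^{-1}]$ is additively free). So the contradiction must come from multiplicative unique factorization, and the precise choice of element to feed into Lemma~\ref{l:refinement} and Lemma~\ref{l:scaling-chain} is where the real work lies. I would first try the element $1+u$ and its associates $u^k(1+u)$, tracking a prime factor through the coordinate cone.
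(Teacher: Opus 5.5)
Your proposal stops exactly where the proof has to happen. The middle and final steps are stated only as aims (``the goal is\dots'', ``this should yield\dots''), and the concrete inputs you name cannot deliver them. The relation $(1+u)(1+u^{-1})=u^{-1}(1+u)^2$ involves only associates of $1+u$, since $1+u^{-1}=u^{-1}(1+u)$, so Lemma~\ref{l:refinement} applied to it returns nothing new. The identity $(1+u)^2=(1+u^2)+2u$ is purely additive. Also, $1+u^2$ and $(1+u)^2$ are not two factorizations of one element, so comparing them would not contradict even the claim that every $1+u^k$ is prime. Your own dichotomy for a prime $p\mid 1+u$, pushed one step further with Lemma~\ref{l:length}(2), shows only this: either $1+u$ is prime up to a unit, or $1+u=pr$ with $p,r$ nonunit additive atoms. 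Neither is contradictory on its own, so you need a relation linking $1+u$ to other elements. Nothing in your sketch produces an equation $\mset{1}\uplus uM=\mset{a}\uplus M$, and the paper does not use Lemma~\ref{l:scaling-chain} here at all.

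The missing idea is a nontrivial multiplicative relation, one in which $1+u$ divides a product of two elements of $S$ but divides neither of them. The paper uses
\[ (1+u+u^2)(1+u^3)=(1+u)(1+u^2+u^4). \]
Both sides equal $(1+u)(1-u+u^2)(1+u+u^2)$, and $1-u+u^2\notin S$ because of its negative $u$-coordinate. Lemma~\ref{l:refinement} gives $g,h,k,l\in S^\bullet$ with $gh=1+u+u^2$, $kl=1+u^3$, $gk=1+u$, and $hl=1+u^2+u^4$. Lemma~\ref{l:length}(1) then gives $\lplus(g)\lplus(k)\le 2$.

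\begin{itemize}
\item If $\lplus(g)=1$, then $(1+u)l=g(1+u^3)$ yields $\zplus(l)\uplus u\,\zplus(l)=\mset{g,u^3g}$. This forces $u^2=1$ or $u^4=1$.
\item If $\lplus(g)=2$, Lemma~\ref{l:length}(2) makes $k$ a unit. Then $(1+u)h=k(1+u+u^2)$ yields $\zplus(h)\uplus u\,\zplus(h)=\mset{k,uk,u^2k}$, which is impossible by parity.
\end{itemize}

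The paper splits the cases as $g$ a unit, $k$ a unit, or both nonunit additive atoms, but the computations are these. Your preliminary facts are correct: $1\in\mca_+(S)$, and the powers $u^k$ are distinct additive atoms permuted by multiplication by $u$. These are exactly what the multiset comparisons require. What your proposal lacks is the choice of this identity.
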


\begin{proof}
	Suppose for the sake of contradiction that $u \in \uu(S) \setminus \{1\}$. Since $(S,+)$ is atomic, \cite[Proposition~3.1]{BCG21} gives $\uu(S) \subseteq \mca_+(S)$. Hence every $u^n$, with $n \in \zz$, is an additive atom. These atoms are pairwise distinct: if $u^m=u^n$ with $m<n$, then $u^{n-m}=1$, and since $u>0$ this forces $u=1$. Set $P=1+u+u^2$, $Q=1+u^3$, $R=1+u$, and $T=1+u^2+u^4$. Since $PQ=RT$, Lemma~\ref{l:refinement} gives $g,h,k,l \in S^\bullet$ such that $P=gh$, $Q=kl$, $R=gk$, and $T=hl$. Now $\lplus(R)=2$, and Lemma~\ref{l:length}(1) yields $\lplus(g)\lplus(k) \le 2$. Since all lengths are positive, the possible pairs are $(1,2)$, $(2,1)$, and $(1,1)$. If $\lplus(g)=1$ and $\lplus(k)=2$, then writing $k=b+c$ with $b,c \in \mca_+(S)$ and applying Lemma~\ref{l:length}(2) to $gb+gc=1+u$ shows that one of $gb,gc$ is $1$, so $g$ is a unit. Similarly, the case $\lplus(g)=2$ and $\lplus(k)=1$ forces $k$ to be a unit. Hence either $g$ is a unit, or $k$ is a unit, or both $g$ and $k$ are nonunit additive atoms.

	Assume first that $g=e$ is a unit. From $gk=1+u$ and $kl=1+u^3$, one obtains $k=e^{-1}(1+u)$ and $l=e(1-u+u^2)$ in $\rr$, so $l+ue=e+u^2e$ in $S$. By Lemma~\ref{l:unit-action}, multiplication by the units $e,u$, and $u^2$ preserves additive atoms, so $e,ue,u^2e$ are distinct additive atoms. Hence Lemma~\ref{l:length}(2) forces $\mset{l,ue}=\mset{e,u^2e}$. If $l=e$, then $ue=u^2e$, and if $l=u^2e$, then $ue=e$, in either case cancellation in $\rr$ gives a positive power of $u$ equal to $1$, hence $u=1$, a contradiction. Assume next that $k=e$ is a unit. From $gk=1+u$ and $gh=1+u+u^2$, one obtains $h+uh=e+ue+u^2e$. If $M=\zplus(h)$, then multiplication by $u$ permutes additive atoms, so $M\uplus uM$ is an additive factorization of $h+uh$. The right side gives the additive factorization $\mset{e,ue,u^2e}$, and additive unique factorization forces $M\uplus uM=\mset{e,ue,u^2e}$, impossible because the two cardinalities are $2|M|$ and $3$. Finally, assume that $g$ and $k$ are nonunit additive atoms. From $gk=1+u$ and $kl=1+u^3$, we get $(1+u)l=g(1+u^3)$, that is, $l+ul=g+u^3g$. If $M=\zplus(l)$, then $M\uplus uM$ and $\mset{g,u^3g}$ are two additive factorizations of $l+ul$, so additive unique factorization gives $M\uplus uM=\mset{g,u^3g}$. Hence $M=\mset{m}$ for some additive atom $m$, and $\mset{m,um}=\mset{g,u^3g}$. If $m=g$, then $um=u^3g$, and so $u^2=1$. If $m=u^3g$, then $um=g$, and so $u^4=1$. Since $u>0$, either equality forces $u=1$, the desired contradiction.
\end{proof}
\smallskip

\begin{lemma} \label{l:singleton-packet}
	Let $S$ be a positive semiring whose additive monoid is a UFM. If $p,y,z \in S^\bullet$, the element $p$ is a nonunit, and $1+py=pz$, then $p$ is not prime in $S^\bullet$.
\end{lemma}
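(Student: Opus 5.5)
The plan is to argue by contradiction: assume $p$ is prime in $S^\bullet$ and use the additive coordinate cone to manufacture a product of two elements of $S^\bullet$ that $p$ divides while dividing neither factor. The setup is as follows. Work in $G=\gp(S,+)$ with the ring structure recalled in Section~\ref{s:prelim}. The relation $1+py=pz$ says $p(z-y)=1$ in $G$, so $w:=z-y$ is the inverse of $p$ in $G$. Since $p$ is a nonunit of $S$, $w\notin S$. Multiplication by $p$ is injective on $G$, so for $s\in S$ we have $p\mid s$ in $S^\bullet$ exactly when $sw$ lies in the nonnegative coordinate cone.

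\emph{Step 1 (normalization).} Write $w=w_+-w_-$ with $w_\pm\in S$ having disjoint additive supports; then $w_-\neq 0$. Since $p$ is injective on $G$, $pw_+=1+pw_-$. So I may assume $y$ and $z$ have disjoint additive supports, and among all such relations I would fix one with $\lplus(z)$ minimal.

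\emph{Step 2 (locating the atom $1$).} Write $z=a_1+\cdots+a_r$ with additive atoms $a_i$, so that $\sum_i pa_i=1+py$. By unique additive factorization, the atom $1$ must occur in $\zplus(pa_i)$ for some $i$, that is, $pa_i=1+r$ with $r\in S$. In particular $p\mid 1+r$.

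\emph{Step 3 (the divisibility relations).} Multiplying the basic relation by $y$ and by $z$ gives
\[
pyz=y+py^2,\qquad pz^2=z+y+py^2.
\]
Hence $p\mid(z+y+py^2)$ and, formally, $p(z^2-y^2)=z+y$ in $G$. The aim is to combine these with primeness of $p$ (applied to products such as $z\cdot(1+py)$, $(z+y)\cdot(z+y)$, or $a_i\cdot(\text{cofactor})$ coming from Step~2) to conclude one of two things: either $p$ divides an element built from $1$ and strictly shorter data, which produces a new relation $1+py'=pz'$ with $\lplus(z')<\lplus(z)$ and contradicts minimality; or, after finitely many steps, $p\mid 1$, which contradicts $p$ being a nonunit. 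To handle the additive bookkeeping, Lemma~\ref{l:length}(2) and Lemma~\ref{l:divisor-closed} let me compare multisets of atoms on both sides, and Lemma~\ref{l:order-hole} excludes the degenerate cases in which $pa_i$ differs from an integer by a single atom.

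\emph{Main obstacle.} The hard step is Step~3: finding the specific product $ab$ with $p\mid ab$ in $S^\bullet$ and $p\nmid a$, $p\nmid b$. Equivalently, I need $ab\,w\ge 0$ coordinatewise while $aw$ and $bw$ each have a negative coordinate. My first attempt would be $a=b=z+y$. Here $(z+y)^2w=(z+y)(z^2-y^2)$, and I would try to verify nonnegativity using the disjointness of the supports of $y$ and $z$ from Step~1, together with $pz^2=z+y+py^2$. If that fails, I would fall back on the descent on $\lplus(z)$ suggested by Step~2.
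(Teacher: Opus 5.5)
There is a genuine gap: your setup is correct, but you never produce the element that witnesses non-primeness, and that element is the whole content of the lemma. You rightly note that $w=z-y$ is the inverse of $p$ in $G$ and that $w\notin S$. Since multiplication by $p$ is injective on $G$, $p\mid s$ in $S^\bullet$ exactly when $sw\in S$. So the task is to find $a,b\in S^\bullet$ with $abw\in S$ but $aw,bw\notin S$.

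Your first candidate $a=b=z+y$ fails in the simplest instance. Take $S=\nn_0[\phi]$ with $\phi=(1+\sqrt5)/2$, whose additive monoid is free on $1,\phi$, and set $p=\phi$, $y=1$, $z=\phi$. Then:
\begin{itemize}
\item $1+py=pz$ holds;
\item $p$ is a nonunit, since $\phi^{-1}=\phi-1\notin S$;
\item $y$ and $z$ have disjoint supports, and $\lplus(z)=1$ is already minimal;
\item yet $z+y=\phi^2$ is divisible by $p$.
\end{itemize}
In general $(z+y)w=z^2-y^2$, and nothing forces this out of $S$. The other product you mention, $z(1+py)$, is useless, because its factor $1+py=pz$ is always divisible by $p$.

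The fallback descent on $\lplus(z)$ is not an argument either. No step is specified that uses primality to produce a shorter relation. In the example above there is no shorter relation ($z'=0$ would give $1+py'=0$), and $p\nmid 1$. So the contradiction must come from an explicit product, which Steps~2--3 never identify. Steps~1--2 and the appeals to Lemmas~\ref{l:divisor-closed} and~\ref{l:order-hole} play no role.

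The missing idea is to take $a=b=r_0=N+pKy$ for suitable positive integers $K<N$. Because $pw=1$, you get $r_0w=Nw+Ky$ and $r_0^2w=N^2w+2NKy+pK^2y^2$. Squaring doubles the weight of $y$ relative to $w$, and that is what you exploit. Write $y_i,z_i$ for the coordinates of $y,z$:
\begin{itemize}
\item $w$ is negative only at coordinates where $y_i>z_i\ge 0$. For these set $\rho_i=(y_i-z_i)/y_i$, and let $\rho=\max\rho_i\in(0,1]$.
\item Choose $\rho/2\le K/N<\rho$.
\item Then $Nw+Ky$ is negative at a coordinate attaining $\rho$, where it equals $y_i(K-N\rho)$. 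Hence $p\nmid r_0$.
\item Where $y_i>z_i$, the coordinate of $N^2w+2NKy$ is $N^2y_i(2K/N-\rho_i)\ge 0$, and it is clearly nonnegative elsewhere. Adding $pK^2y^2\in S$ gives $p\mid r_0^2$.
\end{itemize}
In the golden-ratio example this yields $r_0=2+\phi$, with $(2+\phi)^2=5\phi^2$ while $(2+\phi)/\phi=2\phi-1\notin S$.
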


\begin{proof}
	Let $G=\gp(S,+)$, and identify $S$ with its image in $G$. We will construct $r_0 \in S^\bullet$ with $p \nmid r_0$ and then construct $Q_0 \in S^\bullet$ with $pQ_0=r_0^2$, contradicting primeness. By the additive-coordinate setup in Section~\ref{s:prelim}, $G$ is the free abelian group on the additive atoms, $S$ is the nonnegative coordinate cone, and the natural map $G \to \rr$ is an injective ring homomorphism. Hence multiplication by any nonzero element of $S$ is injective on $G$, because after the injective map $G \to \rr$ it is multiplication by a positive real number. The equality $1+py=pz$ gives $p(z-y)=1$ in $G$. Thus $z-y \neq 0$. If $z-y$ belonged to $S$, then it would be a multiplicative inverse of $p$, contrary to the assumption that $p$ is a nonunit. Hence $z-y \notin S$. Since $S$ is exactly the nonnegative coordinate cone in $G$, some additive coordinate of $z-y$ is negative.

	Write $y_i$ and $z_i$ for the coordinates of $y$ and $z$ in the additive-atom basis of $G$. The set of coordinates with $y_i>z_i$ is nonempty and finite, and for each such coordinate set $\rho_i=(y_i-z_i)/y_i$. Let $\rho=\max \rho_i$. Then $0<\rho\le 1$, and $\rho \in \qq$. The interval $[\rho/2,\rho)$ has positive length and lies in $(0,1]$, so it contains a positive rational number smaller than $1$. Choose positive integers $K<N$ such that $\rho/2 \le K/N<\rho$. Put $r_0=N+pKy \in S^\bullet$ and $q_0=N(z-y)+Ky=Nz-(N-K)y \in G$. Then $pq_0=r_0$. At a coordinate where $\rho_i=\rho$, the coefficient of $q_0$ is $N(z_i-y_i)+Ky_i=y_i(K-N\rho)<0$, so $q_0 \notin S$. If $p \mid r_0$, say $r_0=ps$ with $s \in S$, then $p(q_0-s)=0$ in $G$, and the injectivity of multiplication by $p$ gives $q_0=s$, contradicting $q_0 \notin S$. Thus $p \nmid r_0$.

	Now set $Q_0=N^2(z-y)+2NKy+pK^2y^2=N^2z+(2NK-N^2)y+pK^2y^2 \in G$. Then $pQ_0=r_0^2$. We claim that $Q_0 \in S$. The coordinate of $N^2(z-y)+2NKy$ at $i$ is $N^2(z_i-y_i)+2NKy_i$, it is nonnegative if $y_i \le z_i$, while if $y_i>z_i$ it equals $N^2y_i(2K/N-\rho_i)$, which is nonnegative because $\rho_i\le\rho\le 2K/N$. Hence $N^2(z-y)+2NKy$ belongs to $S$, and after adding $pK^2y^2 \in S$ we get $Q_0 \in S$. Since $pQ_0=r_0^2>0$, in fact $Q_0 \in S^\bullet$. Therefore $p \mid r_0^2$ but $p \nmid r_0$. If $p$ were prime, then $p \mid r_0$, a contradiction. Thus $p$ is not prime.
\end{proof}
\bigskip
\section{The Bi-UF Positive Conjecture}
\label{s:conjecture}

\smallskip

\begin{lemma} \label{l:terminal-atom}
	If $S$ is a bi-UFS positive semiring and $S \ne \nn_0$, then $S$ contains an irrational element belonging to $\mca_+(S) \cap \mca_\times(S)$.
\end{lemma}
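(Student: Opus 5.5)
Since $S\neq\nn_0$ and $S\cap\qq=\nn_0$ by Proposition~\ref{p:rational-collapse}, the semiring $S$ contains an irrational element $x$. Its additive factorization $\zplus(x)$ consists of atoms, not all rational. The rational additive atoms lie in $S\cap\qq=\nn_0$, so the only one is $1$. Hence there is an irrational additive atom $a$. By Theorem~\ref{t:no-units}, $\uu(S)=\{1\}$, so $a$ is a nonunit and factors multiplicatively as a product of multiplicative atoms $a=p_1\cdots p_k$. By Lemma~\ref{l:divisor-closed}, every divisor of an additive atom is an additive atom, so each $p_j\in\mca_+(S)\cap\mca_\times(S)$. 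If every $p_j$ were rational, then each $p_j\in\nn_0$ and $a$ would be rational. Hence some $p_j$ is irrational, and it is the required element.

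The argument is short, and its only real input is the combination of three facts: Proposition~\ref{p:rational-collapse}, which limits the rational elements to $\nn_0$; Theorem~\ref{t:no-units}, which guarantees that the irrational additive atom is not a unit and so has a genuine multiplicative factorization into atoms; and Lemma~\ref{l:divisor-closed}, which keeps every factor an additive atom.

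The one step that needs care is the existence of an irrational additive atom. If every additive atom were rational, each would lie in $\nn_0$ and hence equal $1$. Then $S$ would be generated additively by $1$, that is, $S=\nn_0$, a contradiction.

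I expect no serious obstacle here. The later work of the paper presumably lies in deriving a contradiction from such an element, likely by producing a relation of the form $1+py=pz$ so that Lemma~\ref{l:singleton-packet} applies.
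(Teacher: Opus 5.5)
Your proof is correct and follows the paper's route: use $S\cap\qq=\nn_0$ from Proposition~\ref{p:rational-collapse} (applicable since a UFM is an HFM) to get an irrational additive atom, use Theorem~\ref{t:no-units} to see that it is a multiplicative nonunit, and use Lemma~\ref{l:divisor-closed} to see that its multiplicative atom divisors are additive atoms. The only difference is cosmetic: the paper takes one atom divisor and notes it is irrational because $1$, the only rational additive atom, is not a multiplicative atom, whereas you argue via the full factorization that the factors cannot all be rational; both arguments work.
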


\begin{proof}
	Since every UFM is an HFM, Proposition~\ref{p:rational-collapse} gives $S \cap \qq=\nn_0$. Take $s \in S \setminus \nn_0$. Then $s$ is irrational. In any additive factorization of $s$, at least one atom must be irrational: otherwise all atoms appearing would lie in $\nn_0$, and hence so would $s$. Also $1 \in \mca_+(S)$ by \cite[Proposition~3.1]{BCG21}, while every integer $n \ge 2$ decomposes as $1+\cdots+1$. Hence the only rational additive atom is $1$. Thus $S$ has an irrational additive atom $b$. By Theorem~\ref{t:no-units}, the only multiplicative unit is $1$, so $b$ is a multiplicative nonunit. Since $S^\bullet$ is a UFM, it is atomic. Choose a multiplicative atom $a$ dividing $b$ in $S^\bullet$. Lemma~\ref{l:divisor-closed} gives $a \in \mca_+(S)$, while $a \in \mca_\times(S)$ by construction. Since the only rational additive atom is $1$ and $1$ is not a multiplicative atom, the element $a$ is irrational.
\end{proof}

\smallskip

\begin{theorem} \label{t:cyclotomic}
	No bi-UFS positive semiring contains an irrational element belonging to $\mca_+(S) \cap \mca_\times(S)$.
\end{theorem}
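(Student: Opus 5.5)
The plan is to reuse the identity from the proof of Theorem~\ref{t:no-units}, now with the prime in place of the unit. Suppose $p$ is irrational and $p\in\mca_+(S)\cap\mca_\times(S)$. Since $S^\bullet$ is a UFM, $p$ is prime in $S^\bullet$, and by Theorem~\ref{t:no-units} it is a nonunit. Consequently Lemma~\ref{l:singleton-packet} forbids any relation $1+py=pz$ with $y,z\in S^\bullet$; this is the main weapon.

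First I record the additive facts available. The atoms $1,p,p^2,\dots$ are pairwise distinct reals because $p\neq 1$, but powers $p^j$ need not be additive atoms. Lemma~\ref{l:order-hole} shows that no element of $S^\bullet$ equals $mp-n$ or $n-mp$. Next I take $P=1+p+p^2$, $Q=1+p^3$, $R=1+p$ and $T=1+p^2+p^4$, which satisfy $PQ=RT$. Lemma~\ref{l:refinement} gives $P=gh$, $Q=kl$, $R=gk$ and $T=hl$. Since $\lplus(R)=2$ (the atoms $1$ and $p$ are distinct), Lemma~\ref{l:length} forces one of two situations: $\{g,k\}$ contains the unit $1$, or $g$ and $k$ are both additive atoms. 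The cases are then as follows.

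\textbf{Case $g=1$.} Then $k=1+p$, and $l(1+p)=1+p^3$ gives $l+p=1+p^2$ in $S$. Here I compare additive factorizations, with the atom $p$ on the left. Either $p$ is an atom of $\zplus(p^2)$, which I exclude by a length count (if $p^2=p+w$ then $w=p(p-1)$, and I look for a clash using Lemma~\ref{l:order-hole}), or $l$ absorbs $p$. The latter yields $l=p+\cdots$ and a relation of the form $1+p(\cdot)=p(\cdot)$, contradicting Lemma~\ref{l:singleton-packet}.

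\textbf{Case $k=1$.} Then $h+ph=1+p+p^2$. Writing $M=\zplus(h)$, the left side contains $ph$. Because $p$ divides every term $pm$, unique additive factorization of $1+p+p^2$ together with primality should force a relation $\mset{1}\uplus bM'=\mset{a}\uplus M'$ for a suitable scaling $b$. Lemma~\ref{l:scaling-chain} then makes the atoms a geometric chain $1,b,\dots,b^{N-1}$ with $a=b^N$. Feeding this chain back in, I expect either $p$ rational, which is excluded by Proposition~\ref{p:rational-collapse}, or again an identity $1+py=pz$.

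\textbf{Case $g,k$ both additive atoms.} I mimic the last case of Theorem~\ref{t:no-units}: from $l+pl=g+p^3g$, either $pl$ splits additively or it is an atom. In both subcases, matching atoms gives either $l=g$ and $pl=p^3g$ (so $p^2=1$), or $l=p^3g$ and $pl=g$ (so $p^4=1$), or an unmatched $1$-type term that produces the forbidden packet $1+py=pz$.

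The main obstacle is that multiplication by $p$ does not preserve additive atoms, unlike multiplication by a unit. The clean multiset cancellations of Theorem~\ref{t:no-units} therefore break down, and every case has to be steered into one of the two obstructions, Lemma~\ref{l:scaling-chain} or Lemma~\ref{l:singleton-packet}. If this particular identity does not close every case, I would first replace it by the cyclotomic family $\Phi$-type identities $(1+p^n)(1+p+\dots)$, choosing $n$ so that the forced exponent $N$ in Lemma~\ref{l:scaling-chain} is incompatible with $p$ being irrational. Here I would use that $p^N=b$-chains force algebraic relations of $p$ with $1$ that, via Lemma~\ref{l:order-hole}, cannot hold.
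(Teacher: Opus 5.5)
Your setup (the identity $PQ=RT$, Lemma~\ref{l:refinement}, and the split according to $(\lplus(g),\lplus(k))$) matches the paper's. However, there is a genuine gap, and it sits in the hard case.

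For $g=1$ you get $l+p=1+p^2$, i.e.\ $\zplus(l)\uplus\mset{p}=\mset{1}\uplus\zplus(p^2)$. Since $1$ and $p$ are distinct additive atoms, $p$ must occur in $\zplus(p^2)$ and $1$ must occur in $\zplus(l)$. Hence $p^2=p+y$ and $l=1+y$ with $y=p(p-1)\in S^\bullet$ \emph{always} hold. So the branch you plan to exclude ``by a length count'' is forced, and the alternative ``$l$ absorbs $p$'' never occurs. Lemma~\ref{l:order-hole} gives no clash here, because $p^2-p$ is not of the form $\pm(mp-n)$.

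What is needed is the paper's substantial argument. The identity $(p+y+y^2)(1+p)=p^2(2+py)$ and primality of $p$ give $p\mid 1+p$ or $p^2\mid p+y+y^2$.
\begin{itemize}
\item In the first subcase, $pb=1+p$ yields $1+by=p+y$. Lemma~\ref{l:scaling-chain} and multiplicative atomicity of $p$ then force $p^2=p+1$. This gives $(2+p)^2=5p^2$, so $p\mid 2+p$ with quotient $2p-1$, which Lemma~\ref{l:order-hole} excludes.
\item In the second subcase, write $p^2=np+r$ with $\operatorname{coeff}_p(r)=0$. Comparing $p$-coordinates in $h+p=y+2$ gives $n\ge 2$. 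For $c+d=n$, the elements $U=r(r+cp)$ and $V=r(r+dp)$ satisfy $UV=p^2(cd\,r^2+r^3)$. Yet $p\nmid U$ and $p\nmid V$, since the would-be quotients $p-n$, $p-d$, $p-c$ are excluded by Lemma~\ref{l:order-hole}. This contradicts primality.
\end{itemize}
Nothing in your proposal substitutes for this. The fallback to other cyclotomic identities is only a hope.

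Your other two cases are not actually argued (``should force'', ``I expect''). The tools you reach for, namely matching $pl$, $p^3g$, or $pM$ against atoms, fail for the reason you yourself identify: multiplication by $p$ need not preserve additive atoms.

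The missing idea is to aim Lemma~\ref{l:singleton-packet} not at $p$ but at a prime divisor of $g$. Since $P=1+pR$, you have $gh=1+p\,gk$. In both remaining cases $g\neq 1$: when $k=1$ we have $g=1+p>1$, and when $g,k$ are both additive atoms, $g=1$ would force $\lplus(k)=2$. So $g$ is a nonunit by Theorem~\ref{t:no-units}. Choosing a multiplicative atom $q$ with $g=qc$ gives $1+q(pck)=q(ch)$. By Lemma~\ref{l:singleton-packet}, $q$ is not prime, contradicting factoriality of $S^\bullet$. This disposes of both cases in two lines with no scaling-chain argument; the scaling chain is needed only inside the $g=1$ case.
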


\begin{proof}
	Suppose for the sake of contradiction that $a$ is irrational and belongs to $\mca_+(S) \cap \mca_\times(S)$. By \cite[Proposition~3.1]{BCG21}, $1 \in \mca_+(S)$. By Theorem~\ref{t:no-units}, $S^\bullet$ is reduced, and the standard factorial-monoid fact recalled in Section~\ref{s:prelim} shows that the atom $a$ is prime in $S^\bullet$. Set $P=1+a+a^2$, $Q=1+a^3$, $R=1+a$, and $T=1+a^2+a^4$. Since $PQ=RT$, Lemma~\ref{l:refinement} gives $u,v,w,x \in S^\bullet$ such that $P=uv$, $Q=wx$, $R=uw$, and $T=vx$. As $1$ and $a$ are distinct additive atoms, $\lplus(R)=2$, and Lemma~\ref{l:length}(1) gives $\lplus(u)\lplus(w) \le 2$. Thus the possible pairs $(\lplus(u),\lplus(w))$ are $(1,2)$, $(2,1)$, and $(1,1)$.

\smallskip
\indent \textsc{Case 1:} $\lplus(u)=1$ and $\lplus(w)=2$. Write $w=s+t$ with $s,t \in \mca_+(S)$. Since $uw=1+a$, Lemma~\ref{l:length}(2) applied to $us+ut=1+a$ shows that $\mset{us,ut}=\mset{1,a}$. Hence one of $us,ut$ is $1$, and so $u$ is a unit. Theorem~\ref{t:no-units} gives $u=1$. Thus $w=1+a$, and from $Q=wx$ we get $x=(1+a^3)/(1+a)=a^2-a+1 \in S$. Therefore $x+a=1+a^2$. Let $A=\zplus(x)$ and $B=\zplus(a^2)$. This equality gives $A\uplus\mset{a}=\mset{1}\uplus B$. Comparing coordinates, the $1$-coordinate shows that $A$ has one more copy of $1$ than $B$, the $a$-coordinate shows that $B$ has one more copy of $a$ than $A$, and every other coordinate agrees. Thus, after subtracting the common coordinatewise minimum, there is a common multiset $Y$ such that $B=Y\uplus\mset{a}$ and $A=Y\uplus\mset{1}$. Let $y \in S$ be represented by $Y$. Then $a^2=a+y$ and $x=1+y$. Since $y=0$ would imply $a=1$, we have $y \in S^\bullet$, and in $\rr$ we have $y=a^2-a=a(a-1)>0$. Thus $a>1$.

	Using $y=a^2-a$, we get $a+y+y^2=a^2(1+(a-1)^2)$ and $(1+(a-1)^2)(1+a)=2+ay$, so $(a+y+y^2)(1+a)=a^2(2+ay)$. All divisibility in the next sentence is in $S^\bullet$. Since $a$ is prime, either $a \mid (1+a)$ or $a^2 \mid (a+y+y^2)$: indeed, if $a \nmid (1+a)$, then primeness gives $a \mid (a+y+y^2)$, writing $a+y+y^2=aC$ and canceling one $a$ gives $C(1+a)=a(2+ay)$, so primeness and $a \nmid (1+a)$ give $a \mid C$. We rule out these two alternatives.

	First assume that $a \mid (1+a)$, say $ab=1+a$. From $\lplus(1+a)=2$ and Lemma~\ref{l:length}(1), we have $\lplus(b) \le 2$. If $\lplus(b)=2$, then after writing $b=c+d$ with $c,d \in \mca_+(S)$, Lemma~\ref{l:length}(2) applied to $ac+ad=1+a$ forces one of $ac,ad$ to be $1$, making $a$ a unit. Hence $b$ is an additive atom. Also $b=(1+a)/a=1+1/a>1$. The equalities $y=a^2-a$ and $ab=1+a$ give $1+by=a+y$. Let $M=\zplus(y)$. As $y \ne 0$, the multiset $M$ is nonempty. Expanding $1+by=a+y$ and applying Lemma~\ref{l:length}(2) shows in particular that each $bm$ with $m \in M$ is an additive atom, and gives $\mset{1}\uplus bM=\mset{a}\uplus M$. Lemma~\ref{l:scaling-chain} yields $M=\mset{1,b,\dots,b^{N-1}}$ and $a=b^N$ for some $N \ge 1$. If $N \ge 2$, then $a=b b^{N-1}$ is a proper factorization of the multiplicative atom $a$. Both $b$ and $b^{N-1}$ belong to $S^\bullet$ and are nonunits, since they are greater than $1$ and Theorem~\ref{t:no-units} leaves only $1$ as a unit. Thus $N=1$, and so $a=b$ and $a^2=a+1$. Then $(2+a)^2=5a^2$, so $a \mid (2+a)^2$ and, by primality, $a \mid (2+a)$. But $(2+a)/a=2a-1$ because $a(a-1)=1$, and Lemma~\ref{l:order-hole} excludes $2a-1 \in S^\bullet$. This contradiction rules out the first alternative.

	Now assume that $a^2 \mid (a+y+y^2)$, say $a^2h=a+y+y^2$ with $h \in S^\bullet$. Let $n \in \nn_0$ be the $a$-coordinate of $\zplus(a^2)$, and write $a^2=na+r$, where $r \in S$ has no $a$-coordinate. The equality $a^2=a+y$ gives $n \ge 1$, while $r \ne 0$ because otherwise $a$ would be rational. Hence $r \in S^\bullet$, and in $\rr$ we have $r=a(a-n)>0$, so $a>n$. Also $a \nmid r$: indeed, if $r=as$ with $s \in S$, then $s=a-n \in S^\bullet$ because $a>n$, contradicting Lemma~\ref{l:order-hole}. Dividing $a^2h=a+y+y^2$ by $a^2$ in $\rr$ gives $h=a^2-2a+2$, and so $h+a=y+2$. Since $y=a^2-a=(n-1)a+r$, comparison of the $a$-coordinate in $h+a=y+2$ gives $\operatorname{coeff}_a(h)+1=n-1$, so $n \ge 2$. Choose positive integers $c,d$ with $c+d=n$, and set $U=r^2+car$, $V=r^2+dar$, and $H=cd\,r^2+r^3$, all of which belong to $S^\bullet$. Then, using $r+na=a^2$, one obtains $UV=r^2(r+ca)(r+da)=a^2H$. Thus $a \mid UV$. We claim that $a \nmid U$ and $a \nmid V$. If $a \mid U=r(r+ca)$, then primeness gives $a \mid r$ or $a \mid (r+ca)$. The first has been excluded, and in the second case the quotient is $(r+ca)/a=a-d$, which is positive because $a>n=c+d>d$, and is forbidden by Lemma~\ref{l:order-hole}. The proof that $a \nmid V$ is the same, with quotient $(r+da)/a=a-c$, positive because $a>n>c$. This contradicts the primeness of $a$, and Case~1 is impossible.

\smallskip
\indent \textsc{Case 2:} $\lplus(u)=2$ and $\lplus(w)=1$. Write $u=s+t$ with $s,t \in \mca_+(S)$. Since $uw=1+a$, Lemma~\ref{l:length}(2) applied to $sw+tw=1+a$ shows that one of $sw,tw$ is $1$. Hence $w$ is a unit, and Theorem~\ref{t:no-units} gives $w=1$. Thus $u=1+a$, and from $P=uv$ we get $v=(1+a+a^2)/(1+a) \in S$. The equality $P=uv$ becomes $v+av=1+a+a^2=1+a(1+a)$. The element $1+a$ is a nonunit because it is greater than $1$ and Theorem~\ref{t:no-units} says that $1$ is the only unit. Choose a multiplicative atom $p$ dividing $1+a$, say $1+a=pc$ with $c \in S^\bullet$. Then $1+p(ac)=p(cv)$, where $cv \in S^\bullet$ because $c,v \in S^\bullet$. Applying Lemma~\ref{l:singleton-packet} with $(p,y,z)=(p,ac,cv)$ shows that $p$ is not prime, contradicting the primeness of atoms in the UFM $S^\bullet$. Thus Case~2 is impossible.

\smallskip
\indent \textsc{Case 3:} $\lplus(u)=\lplus(w)=1$. Then $u$ and $w$ are additive atoms. Neither is $1$, since otherwise the equality $uw=1+a$ would force the other to have additive length $2$. By Theorem~\ref{t:no-units}, both $u$ and $w$ are multiplicative nonunits. Choose a multiplicative atom $p$ dividing $u$, say $u=pc$ with $c \in S^\bullet$. From $uw=1+a$ and $uv=1+a+a^2$, with $v,w \in S^\bullet$, we obtain $p(cw)=1+a$ and $p(cv)=1+a+a^2=1+a(1+a)=1+p(acw)$. Thus $1+p(acw)=p(cv)$. Applying Lemma~\ref{l:singleton-packet} with $(p,y,z)=(p,acw,cv)$ makes $p$ nonprime, contradicting the primeness of atoms in the UFM $S^\bullet$. Thus Case~3 is impossible.

	The three cases exhaust all possibilities for $(\lplus(u),\lplus(w))$, and each has led to a contradiction. Therefore no such irrational element $a$ exists.
\end{proof}

\smallskip

\begin{theorem} \label{t:main}
	A positive semiring $S$ is a bi-UFS if and only if $S=\nn_0$.
\end{theorem}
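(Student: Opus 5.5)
The two directions are of very different weight, and I will take them in turn.

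For the ``if'' direction I will verify that $\nn_0$ is a bi-UFS directly. Its additive monoid $(\nn_0,+)$ is free on the single atom $1$, so every element $n$ has the unique additive factorization $\mset{1,\dots,1}$ of length $n$. Its multiplicative monoid $(\nn,\cdot)$ is reduced, and it is factorial by the fundamental theorem of arithmetic, the atoms being the primes.

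For the ``only if'' direction I will argue by contradiction. Let $S$ be a bi-UFS positive semiring and suppose $S\ne\nn_0$. Lemma~\ref{l:terminal-atom} then produces an irrational element $a\in\mca_+(S)\cap\mca_\times(S)$. Theorem~\ref{t:cyclotomic} states that no bi-UFS positive semiring contains such an element, which is a contradiction. Hence $S=\nn_0$.

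There is no genuine obstacle at this stage, because all the substantive work has already been done. That work consists of ruling out nontrivial units (Theorem~\ref{t:no-units}), extracting a common additive–multiplicative atom (Lemma~\ref{l:terminal-atom}), and the three-case refinement analysis of $PQ=RT$ (Theorem~\ref{t:cyclotomic}). The only point I will check is that the hypotheses match exactly. Both Lemma~\ref{l:terminal-atom} and Theorem~\ref{t:cyclotomic} assume only that $S$ is a bi-UFS positive semiring, and the lemma additionally assumes $S\neq\nn_0$, which is precisely our standing assumption. So the two results chain without any further argument.
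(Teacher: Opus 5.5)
Your proof is correct and matches the paper's argument. The ``if'' direction is the same direct check that $(\nn_0,+)$ is free on $1$ and $(\nn,\cdot)$ is free on the primes, and the ``only if'' direction chains Lemma~\ref{l:terminal-atom} (whose hypotheses are exactly that $S$ is a bi-UFS and $S\neq\nn_0$) with Theorem~\ref{t:cyclotomic} to reach a contradiction.
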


\begin{proof}
	If $S=\nn_0$, then $(S,+)$ is freely generated by $1$, while $S^\bullet=\nn$ is the free commutative monoid on the ordinary primes. Hence $\nn_0$ is a bi-UFS. Conversely, suppose that $S$ is a bi-UFS positive semiring. If $S \ne \nn_0$, then Lemma~\ref{l:terminal-atom} gives an irrational element in $\mca_+(S) \cap \mca_\times(S)$, contradicting Theorem~\ref{t:cyclotomic}. Hence $S=\nn_0$.
\end{proof}

\bigskip

\end{document}